\newcommand{\thistheoremname}{Theorem}
\newtheorem*{genericthm*}{\thistheoremname}
\newenvironment{namedthm*}[1]{\renewcommand{\thistheoremname}{#1}
   \begin{genericthm*}}{\end{genericthm*}}
\theoremstyle{plain}
\newtheorem{prop}{Proposition}[section]
\newtheorem{cor}[prop]{Corollary}
\newtheorem{lem}[prop]{Lemma}
\theoremstyle{definition}
\newtheorem{lab}[prop]{}
\theoremstyle{remark}
\newtheorem{rem}[prop]{Remark}
\newcommand{\isoto}{\overset{\sim}{\to}}
\renewcommand{\subset}{\subseteq}
\newcommand{\A}{{\mathbb{A}}}
\newcommand{\C}{{\mathbb{C}}}
\renewcommand{\P}{{\mathbb{P}}}
\newcommand{\R}{{\mathbb{R}}}
\newcommand{\Z}{{\mathbb{Z}}}
\newcommand{\scrO}{{\mathscr O}}
\newcommand{\scrP}{{\mathscr P}}
\newcommand{\x}{{\mathtt{x}}}
\DeclareMathOperator{\Div}{Div}
\renewcommand{\div}{\mathop{\rm div}}
\DeclareMathOperator{\Pic}{Pic}
\DeclareMathOperator{\supp}{supp}
\newcommand{\trdeg}{\mathrm{trdeg}}
\renewcommand{\emptyset}{\varnothing}
\newcommand{\ol}{\overline}
\renewcommand{\setminus}{\smallsetminus}
\renewcommand{\epsilon}{\varepsilon}
\newcommand{\sa}{semi-algebraic}
\begin{document}

\title
[Polynomials nonnegative on the cylinder]
{Polynomials nonnegative on the cylinder}

\author
{Claus Scheiderer, Sebastian Wenzel}

\address
 {Fachbereich Mathematik und Statistik \\
 Universit\"at Konstanz \\
 78457 Konstanz \\
 Germany}
\email
 {claus.scheiderer@uni-konstanz.de, wenzelsebastian@gmx.de}

\subjclass[2010]
{Primary
14P05;
secondary
14P10,
14P99}

\keywords
  {Positive polynomials, sums of squares, real algebraic surfaces}

\begin{abstract}
In 2010, Marshall settled the strip conjecture, according to which
every polynomial in $\R[x,y]$, nonnegative on the strip
$[-1,1]\times\R$, is a sum of squares and of squares times $1-x^2$.
We consider affine nonsingular curves $C$ over $\R$ with $C(\R)$
compact, and study the question whether every $f$ in $\R[C][y]$,
nonnegative on $C(\R)\times\R$, is a sum of squares in $\R[C][y]$.
We give an affirmative answer under the condition that $f$ has only
finitely many zeros in $C(\R)\times\R$. For $C$ the circle
$x_1^2+x_2^2=1$, we prove the result unconditionally.
\end{abstract}


\maketitle


\section*{Introduction}

A couple of years ago, Murray Marshall \cite{ma} proved that every
polynomial $f\in\R[x,y]$, nonnegative on the strip
$[-1,1]\times\R\subset\R^2$, can be written in the form
$f=s+(1-x^2)t$, where $s,\,t\in\R[x,y]$ are sums of squares of
polynomials. As soon as his result became known, it caused quite a
bit of excitement among the experts. The question had been a
well-known open problem for several years. It originated in a false
claim made in 2001, for which the first author of this present paper
was responsible. At the very end of \cite{ps}, it was announced that
a forthcoming paper would contain a proof of the above statement.
Soon after \cite{ps} had gone into print, the intended proof broke
down, after which the question became known as the \emph{strip
conjecture}. In the years to follow, many people tried in vain to
solve the problem. When Murray surprised us with his success, it was
with great joy and admiration that we studied his elegant arguments.

This paper builds on his ideas. Our initial goal had been to
replace the interval $I=[-1,1]$ by a nonsingular compact real curve
$C(\R)$, and to show that every polynomial $f\in\R[C][y]$,
nonnegative on $C(\R)\times\R$, is a sum of squares in $\R[C][y]$.
However, in this generality we did not succeed.
Following the overall strategy of Murray's argument, there are
several points where new ideas are required. A major problem arises
from the lack of unique factorization in $\R[C]$. This prevents us
from reducing to the case where $f$ has only finitely many zeros in
$C(\R)\times\R$. In general we were unable to overcome this
difficulty, and so we have to assume
that the zero set of $f$ in $C(\R)\times\R$ is finite. When the curve
$C$ is rational, the divisor class group is small enough to get
around this point, using a homological argument. For $C$ the circle
curve, we can therefore prove the full statement without restriction.

The two main results of this paper are thus:

\begin{namedthm*}{Theorem~1}%
Let $C$ be a nonsingular affine curve over $\R$ with $C(\R)$ compact.
Let $V=C\times\A^1$, and let $f\in\R[V]=\R[C][y]$. If $f\ge0$ on
$V(\R)=C(\R)\times\R$, and if $f$ has only finitely many zeros, then
$f$ is a sum of squares in $\R[V]$.
\end{namedthm*}

\begin{namedthm*}{Theorem~2}%
Let $C$ be the plane affine curve over $\R$ with equation
$x_1^2+x_2^2=1$, and let $V=C\times\A^1$. Then every $f\in\R[V]$ with
$f\ge0$ on $V(\R)$ is a sum of squares in $\R[V]$.
\end{namedthm*}

The proof of Theorem~1 (resp.\ Theorem~2) is given in Section~1
(resp.\ Section~2). We also present a generalized version of
Theorem~1 that applies to polynomials nonnegative on $K\times\R$,
where $K$ is a compact semi-algebraic subset of a nonsingular curve.
See Corollary \ref{thm1gend} for the precise statement. We conjecture
that Theorem~1 holds unconditionally for every $f\in\R[V]$
nonnegative on $V(\R)$, even if $f$ has infinitely many real zeros.

The results of this paper are largely contained in the second
author's doctoral thesis \cite{we}. The thesis contains other
generalizations of the strip theorem that we plan to publish
elsewhere.


\section{Proof of Theorem~1}

\begin{lab}\label{remtrdeg}%
Marshall's \emph{strip theorem} \cite{ma} provides the first example
of a two-dimensional semi-algebraic set $K\subset\R^n$ for which the
saturated preorder
$$\scrP(K)\>=\>\{f\in\R[\x]\colon f|_K\ge0\}$$
is finitely generated and the ring $B(K)\subset\R[\x]/I_K$ of bounded
polynomial functions on~$K$ has transcendence degree~$\le1$. (Here
$I_K\subset\R[\x]$ is the ideal of polynomials vanishing on~$K$).
Indeed, a~polynomial in $\R[x,y]$ is bounded on $[-1,1]\times\R$ if
and only if it lies in $\R[x]$. To put this remark into perspective,
recall \cite{sch:tams} that $\scrP(K)$ can never be finitely
generated when $\dim(K)\ge3$. Examples of two-dimensional sets $K$
with $\scrP(K)$ finitely generated are known since about 2004, see
\cite{sch:surf}. But all these examples were either compact, or
derived from some compact set in a simple manner. In particular, all
these examples carried plenty of bounded polynomials, in the sense
that the ring $B(K)$ had full trans\-cendence degree two. Before
Marshall's theorem, it was not known whether such examples could
exist with $\trdeg\,B(K)\le1$.%
\end{lab}

\begin{lab}
Our proof is inspired by the strategy of proof in \cite{ma}, although
the details are different in several respects. Let $I=[-1,1]$, and
let $T$ be the preorder in $\R[x,y]$ generated by $1-x^2$. Let
$f\in\R[x,y]$ with $f\ge0$ on $I\times\R$, say $f=a_dy^d+\cdots+a_0$
with $a_i\in\R[x]$ and $a_d\ne0$. To show $f\in T$, Marshall observes
that the leading coefficient $a_d$ is nonnegative on $I$. By
a~standard reparametrization argument he can assume $a_d>0$ on~$I$.
Moreover, by extracting irreducible factors of $f$ with infinitely
many zeros in $I\times\R$, he reduces to the case where $f$ has only
finitely many zeros in $I\times\R$.

Neither step works in our situation. We are considering
$f\in\R[C][y]$, where $C$ is a nonsingular affine curve and $C(\R)$
is compact, and we try to show that $f\ge0$ on $C(\R)\times\R$
implies that $f$ is a sum of squares in $\R[C][y]$. Both reduction
steps would essentially require unique factorization in $\R[C]$. We
get around the first step by using a different approach, based on the
\L ojasiewicz inequality. But we have to make it an assumption that
the zero set of $f$ is finite.

After the initial reduction steps, the key idea in \cite{ma} is to
find a nonzero product $p(x)s(y)$ of two polynomials, with variables
separated, for which $0\le p(x)s(y)\le f(x,y)$ holds on $I\times\R$.
This creates enough room for approximation: One first solves the
problem in polynomials whose coefficients are analytic locally around
$x\in I$, and then uses a refined Weierstra\ss\ approximation
argument to get a global polynomial solution. Our proof essentially
follows this approach, although the details need to be modified, in
particular since we cannot guarantee strict positivity of the leading
coefficient.
\end{lab}

\begin{lab}
Let always $C$ be a nonsingular affine curve over $\R$ whose set
$C(\R)$ of $\R$-points is compact and non-empty. Any $p\in\R[C]$ with
$p\ge0$ on $C(\R)$ is a sum of squares in $\R[C]$, by \cite{sch:mz}
Theorem 4.15). The affine surface $V=C\times\A^1$ has coordinate ring
$\R[V]=\R[C][y]$, the polynomial ring over $\R[C]$ in the
variable~$y$. We will often express elements $0\ne f\in\R[V]$ in the
form $f=\sum_{i=0}^da_iy^i$ with $d\ge0$, $a_i\in\R[C]$ and
$a_d\ne0$. In this case we write $d=\deg_y(f)$ and refer to
$a_d\in\R[C]$ as the \emph{leading coefficient} of~$f$. The zero set
of $f$ in $V(\R)=C(\R)\times\R$ is denoted $Z(f)=\{z\in V(\R)\colon
f(z)=0\}$.
\end{lab}

\begin{lem}\label{leadcoeff}%
Let $0\ne f\in\R[V]$ with $f\ge0$ on $V(\R)$, let $d=\deg_y(f)$, and
let $a\in\R[C]$ be the leading coefficient of~$f$.
\begin{itemize}
\item[(a)]
$d$ is even, and $a\ge0$ on $C(\R)$.
\item[(b)]
If $a>0$ on $C(\R)$, then $Z(f)$ is compact.
\end{itemize}
\end{lem}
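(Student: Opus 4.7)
For part (a), the plan is to reduce to a univariate problem by evaluating $f$ at suitable real points of $C$. Since $a \in \R[C]$ is nonzero and $\dim C = 1$, the zero locus of $a$ on $C$ is finite; at the same time, because $C$ is nonsingular with $C(\R)$ a non-empty compact $1$-manifold, $C(\R)$ is infinite. Hence there exists $c \in C(\R)$ with $a(c) \ne 0$. For any such $c$, the specialisation $f(c,y) \in \R[y]$ has degree exactly $d$ with leading coefficient $a(c)$, and it is nonnegative on all of $\R$. A nonzero univariate real polynomial nonnegative on $\R$ must have even degree and positive leading coefficient, so $d$ is even and $a(c) > 0$. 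Since this holds at every $c \in C(\R)$ outside the finite set $\{a = 0\} \cap C(\R)$, I conclude that $a \ge 0$ on $C(\R)$.

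For part (b), the plan is to combine compactness of $C(\R)$ with the classical Cauchy bound on real roots of a univariate polynomial. Write $f = \sum_{i=0}^d a_i y^i$ with $a_d = a$. Under the hypothesis $a > 0$ on the compact set $C(\R)$, the continuous function $a$ attains a positive minimum $m > 0$, and each $a_i$ is bounded in absolute value by some $M < \infty$. For any zero $(c,y) \in Z(f)$, the univariate polynomial $f(c,\cdot) \in \R[y]$ has leading coefficient $\ge m$ and other coefficients of absolute value $\le M$, so Cauchy's bound yields $|y| \le 1 + M/m =: R$. Thus $Z(f) \subseteq C(\R) \times [-R,R]$, and being the closed zero set of a continuous function inside this compact product, $Z(f)$ is itself compact.

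No serious obstacle should arise in either part, as the lemma is foundational for the main argument. The only point requiring a little care is the production, in (a), of a real point of $C$ where the leading coefficient does not vanish; this is what forces $d$ to be even rather than merely $\le d$, and it is ensured at once by the standing hypothesis that $C$ is nonsingular with $C(\R)$ non-empty. Once (a) is in place, part (b) is essentially a uniform version of the standard univariate fact that real roots are controlled by the ratio of the coefficients to the leading coefficient.
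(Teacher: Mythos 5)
Your proposal is correct and follows essentially the same route as the paper: restrict $f$ to a vertical line $\{x\}\times\R$ at a point where $a$ does not vanish to deduce even degree and $a(x)>0$ for part~(a), and combine a uniform lower bound on $a$ with the boundedness of the coefficients on the compact set $C(\R)$ together with a standard root bound (you use the Cauchy form $1+M/m$, the paper uses $\frac{1}{|a_d|}\sum_i|a_i|$, which are interchangeable) to trap $Z(f)$ in a compact product for part~(b).
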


\begin{proof}
For any $x\in C(\R)$ with $a(x)\ne0$, restrict $f$ to the line
$\{x\}\times\R\subset V(\R)$ to see that $d$ is even and $a(x)>0$.
Therefore $a\ge0$ on $C(\R)$ by continuity. If $a>0$ on $C(\R)$,
there is a real constant $c>0$ with $a\ge c$ on $C(\R)$. All zeros
$\alpha$ of a polynomial $\sum_{i=0}^da_iy^i$ in $\R[y]$ with
$a_d\ne0$ satisfy $|\alpha|\le\frac1{|a_d|}\sum_{i=0}^d|a_i|$. Since
the coefficients of $f$ are bounded on $C(\R)$, it is therefore clear
that $Z(f)$ is compact.
\end{proof}

Let $f\in\R[V]$ be nonnegative on $V(\R)$ with $Z(f)$ finite. In the
next two lemmas we show that $f$ can be bounded from below by a
product of sums of squares with separated variables. Other than in
\cite{ma} (Lemmas 4.1 and 4.2), we cannot arrange the leading
coefficient of $f$ to be strictly positive. So we have to argue along
a different line.
Recall that a function $C(\R)\to\R$ is called semi-algebraic if its
graph is a semi-algebraic subset of $C(\R)\times\R$.

\begin{lem}\label{sosdrunter1D}%
Let $g\colon C(\R)\to\R$ be a continuous \sa\ function with $g(x)=0$
for only finitely many $x\in C(\R)$. Then there exists $0\ne p\in
\R[C]$ with $p^2\le|g|$ on $C(\R)$.
\end{lem}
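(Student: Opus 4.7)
The plan is to invoke the \L ojasiewicz inequality for continuous \sa\ functions on a compact \sa\ set, applied to $g$ and to a suitable nonzero polynomial $h\in\R[C]$ that vanishes on the (finite) zero set of~$g$. The degenerate case $Z(g)=\emptyset$ is immediate: the continuous function $|g|$ attains a positive minimum $m>0$ on the compact set $C(\R)$, so the nonzero constant $p:=\sqrt{m}\in\R\subset\R[C]$ satisfies $p^2\le|g|$.

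Assume from now on that $Z(g)=\{x_1,\ldots,x_n\}$ with $n\ge1$. For each $i$, pick a nonzero element $h_i$ of the maximal ideal $\mathfrak m_{x_i}\subset\R[C]$ corresponding to the real point $x_i$; this is possible because $\dim C=1$ forces $\mathfrak m_{x_i}$ to be a nonzero proper ideal. Set $h:=h_1\cdots h_n\in\R[C]$. Then $h\ne0$ (since $\R[C]$ is a domain) and $h(x_i)=0$ for every $i$, so $\{g=0\}\subseteq\{h=0\}$ on $C(\R)$. The \sa\ \L ojasiewicz inequality (e.g.\ Bochnak--Coste--Roy, Cor.~2.6.7) then produces constants $c>0$ and $N\in\Z$ with $N\ge1$ such that $|h|^N\le c\,|g|$ on $C(\R)$.

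Replacing $N$ by $N+1$ if necessary, at the cost of multiplying $c$ by $\max_{C(\R)}|h|$, I may assume $N=2M$ is even, so that $|h|^N=h^{2M}$ pointwise on $C(\R)$. Setting $p:=h^M/\sqrt{c}\in\R[C]$ then yields the required element: $p\ne0$ and $p^2=h^{2M}/c\le|g|$ on $C(\R)$. The whole argument is essentially a packaging of the \L ojasiewicz inequality; the only thing to identify is its \sa\ form applied in the correct direction, and I do not expect any substantial technical obstacle beyond that and the elementary construction of~$h$.
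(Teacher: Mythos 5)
Your proposal is correct and takes essentially the same route as the paper: choose a nonzero regular function vanishing on $Z(g)$, apply the semi-algebraic \L ojasiewicz inequality (BCR Cor.~2.6.7) in the same direction to get $|h|^N\le c|g|$, adjust so the exponent is even, and rescale. The paper simply states "choose $0\ne q\in\R[C]$ with $q(x)=0$ for every $x\in Z(g)$" without spelling out the construction via maximal ideals or separating the case $Z(g)=\emptyset$, but there is no substantive difference.
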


\begin{proof}
Let $Z(g)=\{x\in C(\R)\colon g(x)=0\}$, and choose $0\ne q\in\R[C]$
with $q(x)=0$ for every $x\in Z(g)$. By the \sa\ \L ojasiewicz
inequality (\cite{bcr} Corollary 2.6.7) there exist an integer
$N\ge1$ and a real constant $c>0$ with $|q|^N\le c|g|$ on $C(\R)$.
Enlarging $c$ if necessary we can assume that $N=2n$ is even.
So we can take $p=sq^n$, for $s>0$ a small real number.
\end{proof}

\begin{lem}\label{sepsosdrunter}%
Let $f\in\R[V]=\R[C][y]$ with $f\ge0$ on $V(\R)$ and $\deg_y(f)=d$,
and assume $|Z(f)|<\infty$. Given any polynomial $s\in\R[y]$ with
$\deg_y(s)=d$, there exists $0\ne p\in\R[C]$ such that $f(x,y)\ge
p(x)^2s(y)$ for all $(x,y)\in C(\R)\times\R$.
\end{lem}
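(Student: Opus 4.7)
The plan is to split on the sign of the leading coefficient $c$ of $s$, after first reducing to the case where $C$ is irreducible and $\deg_y(f)=d$ holds on $C$. For the reduction, nonsingularity of $C$ makes its irreducible components $C_1,\dots,C_k$ pairwise disjoint, so $\R[C]=\prod_i\R[C_i]$ and it suffices to construct the restriction of $p$ to each $C_i$ separately. On a component with $\deg_y(f|_{C_i})<d$ I take the restriction to be $0$; on the (necessarily nonempty) set of components where $\deg_y(f|_{C_i})=d$ I construct a nonzero restriction by the argument below. In what follows I therefore assume $C$ irreducible, so that the leading coefficient $a\in\R[C]$ of $f$ is a nonzero element of a one-dimensional ring and hence has only finitely many zeros on $C(\R)$.

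The easy case is $c<0$. Because $d$ is even by Lemma~\ref{leadcoeff}(a), $s(y)\to-\infty$ as $|y|\to\infty$, so there is $R>0$ with $s<0$ off $[-R,R]$; the inequality $p^2 s\le f$ is then automatic off the compact strip $C(\R)\times[-R,R]$. On this strip I apply Lemma~\ref{sosdrunter1D} to the continuous semi-algebraic function $\phi(x):=\min_{|y|\le R}f(x,y)$, whose zero set equals $\{x\in C(\R):(x,y)\in Z(f)\text{ for some }y\in[-R,R]\}$ and is therefore finite. The resulting $0\ne q\in\R[C]$ with $q^2\le\phi$ yields, after a suitable positive rescaling absorbing $\max_{|y|\le R}s(y)$, the required $p$.

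The substantive case is $c>0$. Here $s$ is bounded below on $\R$, so I pick $M>0$ with $s(y)+M\ge 1$ for all $y\in\R$ and define
\[
  h(x)\,:=\,\inf_{y\in\R}\frac{f(x,y)}{s(y)+M}.
\]
Compactifying $y$ to $\R\cup\{\infty\}$, where the quotient extends continuously with value $a(x)/c$ at $\infty$, exhibits $h$ as the minimum of a continuous function on a compact space; thus $h$ is continuous on $C(\R)$, and Tarski--Seidenberg yields its semi-algebraicity. The critical observation is that $h(x)=0$ precisely when the minimum is attained either at some finite $y_0$ with $f(x,y_0)=0$ or in the limit $y\to\infty$ with $a(x)=0$, so the zero set of $h$ is contained in $\{x\in C(\R):(x,y)\in Z(f)\text{ for some }y\}\cup\{a=0\}$, a finite set by the reduction above. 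Lemma~\ref{sosdrunter1D} then furnishes $0\ne p\in\R[C]$ with $p^2\le h$ on $C(\R)$; multiplying by $s(y)+M>0$ and discarding the nonnegative term $Mp(x)^2$ gives $p(x)^2 s(y)\le f(x,y)$ throughout $V(\R)$.

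The main obstacle I anticipate is the zero-set analysis of $h$ in the $c>0$ case: the shift by $M$ is what keeps the quotient continuous across the real roots of $s$, and the reduction to irreducible $C$ is what forces $Z(a)$ to be finite. Both are essential for $h$ to satisfy the hypotheses of Lemma~\ref{sosdrunter1D}.
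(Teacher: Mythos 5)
Your proof is correct and follows the same core strategy as the paper's: compactify the $y$-line to $\P^1(\R)$, define the infimum over $y$ of $f/(s+\text{const})$ as a continuous semi-algebraic function on $C(\R)$, observe that its zero set is finite (zeros of the leading coefficient $a$ of $f$, plus the projection of $Z(f)$), and invoke Lemma~\ref{sosdrunter1D}. You add two careful refinements. First, the reduction to irreducible $C$ guarantees that $a$ has only finitely many real zeros; the paper asserts this without comment, and it would fail if $a$ vanished on an entire real component. Second, you split on the sign of the leading coefficient $c$ of $s$: when $c<0$ the paper's opening step (``add a positive constant to $s$ to make it strictly positive'') does not work, since $s$ is then unbounded below; your separate treatment of that case, via the compact strip $C(\R)\times[-R,R]$, correctly handles it and is in fact easier. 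The paper's argument as written covers only $c>0$, which suffices for its sole application in \ref{endproofthm1} with $s=y^{2m}+1$, so both of your additions fill genuine (if minor) gaps rather than constituting a different route.
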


\begin{proof}
By adding a positive constant to $s$ we may assume that $s>0$ on
$C(\R)$.
Consider $\R$ with its natural embedding in $\P^1(\R)=\R\cup
\{\infty\}$. Since $f$ and $s$ have the same $y$-degree, the map
$C(\R)\times\R\to\R$, $(x,y)\mapsto\frac{f(x,y)}{s(y)}$ extends to a
continuous map $\phi\colon C(\R)\times\P^1(\R)\to\R$, namely by
$\phi(x,\infty)=\frac{a_d(x)}{b_d}$ if $f(x,y)=\sum_{i=0}^d
a_i(x)y^i$, $s(y)=\sum_{i=0}^db_iy^i$.
For $x\in C(\R)$ put
$$g(x)\>:=\>\inf\Bigl\{\frac{f(x,y)}{s(y)}\colon y\in\R\Bigr\}\>=\>
\min\bigl\{\phi(x,y)\colon y\in\P^1(\R)\bigr\}.$$
Then $g\colon C(\R)\to\R$ is a well-defined function with \sa\ graph.
From the second description it is easy to see that $g$ is continuous.
The zeros of $g$ in $C(\R)$ are the zeros of $a_d\in\R[C]$, together
with the projection of $Z(f)\subset C(\R)\times\R$ to $C(\R)$.
Hence $g$ has only finitely many zeros in $C(\R)$, and clearly
$g\ge0$ on $C(\R)$. By Lemma \ref{sosdrunter1D} there exists
$0\ne p\in\R[C]$ with $p^2\le g$ on $C(\R)$. This is the assertion.
\end{proof}

\begin{lab}
In the following let $\scrO_0$ denote the ring of convergent real
power series $\sum_{i\ge0}a_ix^i$ in one variable. This is a
(henselian) discrete valuation ring with residue field~$\R$. As
usual, an element $f$ of a ring $A$ is said to be \emph{psd}
(positive semidefinite) in $A$ if $f$ is nonnegative on the real
spectrum of $A$. By the abstract Nichtnegativstellensatz, it is
equivalent that there is an identity $sf=f^{2n}+t$ with $n\ge0$ and
$s,\,t$ sums of squares in $A$. For the ring $A=\scrO_0[y]$, a
polynomial $f(x,y)=\sum_{i=0}^da_i(x)y^i$ with coefficients
$a_i\in\scrO_0$ is psd in $\scrO_0[y]$ if and only if there exists
$\epsilon>0$ such that $f$ is defined and nonnegative on
$\left]-\epsilon,\epsilon\right[\times\R$.

The following lemma is a particular case of \cite{sch:tams} Lemma
1.8:
\end{lab}

\begin{lem}\label{psdsosscro}%
Every psd element of the polynomial ring $\scrO_0[y]$ is a sum of
squares in $\scrO_0[y]$.
\qed
\end{lem}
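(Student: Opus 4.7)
The plan is to reduce to the residue field via Hensel's lemma, exploiting that $\scrO_0$ is a henselian DVR with real closed residue field~$\R$. First I would normalize~$f$ by removing the largest power of~$x$ dividing it: writing $f=x^vg$ with $x\nmid g$, the psd hypothesis forces~$v$ to be even (pick $y_0\in\R$ with $\bar g(y_0)\ne0$ and note that $f(x,y_0)$ cannot change sign at $x=0$), so I would replace~$f$ by~$g$ and assume $\bar f(y):=f(0,y)\in\R[y]$ is nonzero.

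Since $\bar f\ge0$ on~$\R$, one-variable theory yields a factorization $\bar f=\bar c\cdot\bar p^2\cdot\bar q$ with $\bar c>0$, $\bar p\in\R[y]$ monic (encoding the real roots with half their multiplicity) and $\bar q\in\R[y]$ monic with $\bar q>0$ on~$\R$ (a product of irreducible quadratics), and crucially $\gcd(\bar p,\bar q)=1$. By the argument of Lemma~\ref{leadcoeff} applied in the local setting, $d:=\deg_yf$ is even and the leading coefficient~$a_d$ is psd in~$\scrO_0$, hence of the form $x^{2k}u$ with~$u$ a positive unit; as positive units in~$\scrO_0$ admit square roots (Hensel applied to $t^2-u$), $a_d$ is itself a square. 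Now Hensel's lemma for the henselian ring~$\scrO_0$ lifts the coprime factorization to $f=a\cdot P\cdot Q$ in~$\scrO_0[y]$, with $a\in\scrO_0$ a square, $\bar P=\bar p^2$, and $\bar Q$ proportional to~$\bar q$. The factor~$Q$ is sos in $\scrO_0[y]$: it is a product of Hensel lifts of irreducible real quadratics $y^2+by+c\in\scrO_0[y]$, each of which satisfies $4c-b^2=e^2$ for some $e\in\scrO_0$ (again by Hensel, since $\overline{4c-b^2}>0$), so that $y^2+by+c=(y+b/2)^2+(e/2)^2$.

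The main obstacle is to show that the factor~$P$, whose reduction $\bar p^2=\prod_i(y-\alpha_i)^{2m_i}$ is concentrated on the real roots of~$\bar f$, is itself sos. Hensel further splits $P=\prod_iP_i$ with $\bar P_i=(y-\alpha_i)^{2m_i}$ and $\deg_yP_i=2m_i$, so after translating $y\mapsto y+\alpha_i$ the task reduces to the following: a monic $P_i\in\scrO_0[y]$ of even degree~$2m_i$ with $\bar P_i=y^{2m_i}$ and $P_i\ge0$ on $\left]-\epsilon,\epsilon\right[\,\times\R$ must be sos. I would attack this by induction on~$m_i$ via a Newton-polygon analysis of~$P_i$ over the henselian~$\scrO_0$, splitting off further coprime Hensel factors whenever local branches of~$P_i$ separate for $x\ne0$. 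The psd constraint forces these branches to organize themselves into real pairs or complex-conjugate pairs, respectively contributing squares or sums of two squares; the residual confluent case (a single Newton-polygon slope) would be handled by a direct computation that exploits the psd-forced constraints on the coefficients of~$P_i$ together with the henselian square-root lifting already invoked above. The ancillary case $\deg\bar f<d$ of the initial reduction (when $a_d\in(x)$) can be treated by the same ideas after an auxiliary step that isolates the high-$y$-degree contribution of~$f$; this, together with the confluent Newton-polygon step for~$P_i$, is where the real work of the proof lies.
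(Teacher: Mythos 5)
The paper itself does not prove this lemma; it cites \cite{sch:tams}, Lemma 1.8, of which the present statement is a special case. So there is no in-paper argument to compare yours against. What the paper \emph{does} emphasize, in the remark immediately following the lemma, is that the substance here --- the improvement over \cite{ma}, Lemma 4.3 --- is precisely that the leading coefficient of $f$ is allowed to lie in the maximal ideal $(x)$ of $\scrO_0$.

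Your reduction strategy is sensible as far as it goes: stripping the even power $x^v$, reducing mod $x$, factoring $\bar f=\bar c\,\bar p^2\bar q$, lifting the coprime factorization by Hensel, extracting a square root of a positive unit, and recognizing the lift of $\bar q$ as a sum of two squares per irreducible quadratic factor --- all of that is correct. But the proposal then defers, explicitly, exactly the two points that carry the content of the lemma. First, when $a_d\in(x)$, the degree of $\bar f$ drops and the monic coprime factorization $\bar p^2\cdot(\bar c\bar q)$ no longer lifts to a splitting of $f$ into a square times a monic piece and a unit; you acknowledge this under ``ancillary case $\deg\bar f<d$'' and promise ``an auxiliary step'' without giving it. This is precisely the case the paper's remark singles out as new, so omitting it leaves the lemma no stronger than Marshall's. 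Second, even when $a_d$ is a unit, the confluent Newton-polygon case for each monic block $P_i$ (all roots of the same valuation, possibly fractional, requiring a ramified base change $x=t^s$) is handled by the phrase ``a direct computation that exploits the psd-forced constraints,'' which is a description of a gap, not an argument. By your own words these are ``where the real work of the proof lies''; as written, that work has not been done, so the proposal is a plausible outline rather than a proof.
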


\begin{rem}
Lemma \ref{psdsosscro} is a stronger version of \cite{ma} Lemma 4.3,
in that we allow the leading coefficient of the polynomial to lie in
the maximal ideal of $\scrO_0$.
One can in fact show that every psd element in $\scrO_0[y]$ is a sum
of two squares, generalizing also the quantitative part of \cite{ma}
Lemma 4.3. We skip the argument since this fact will not be needed.
\end{rem}

\begin{lab}\label{contanalytfcts}%
On $C(\R)$ there is a natural structure of one-dimensional real
analytic manifold. For any open subset $U\subset C(\R)$, let
$\scrO(U)$ denote the ring of analytic functions $U\to\R$. Given
finitely many points $P_1,\dots,P_r$ in $C(\R)$, let
$A(P_1,\dots,P_r)$ denote the ring of all continuous functions
$C(\R)\to\R$ that are real analytic in suitable neighborhoods of
$P_1,\dots,P_r$.
\end{lab}

\begin{lem}\label{sosinay}%
Given $P_1,\dots,P_r\in C(\R)$, any $f\in\R[V]$ with $f\ge0$ on
$V(\R)$ is a sum of squares in the polynomial ring $A[y]$, where
$A=A(P_1,\dots,P_r)$.
\end{lem}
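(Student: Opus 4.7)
The plan is to apply Lemma \ref{psdsosscro} locally at every point of $C(\R)$ and then glue the resulting sum-of-squares decompositions by means of a continuous partition of unity on $C(\R)$ chosen to be locally constant in a neighborhood of each $P_l$. The subtle point is that the patching functions must lie in $A$, not merely in the ring of continuous functions on $C(\R)$.

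First, for each $Q\in C(\R)$, the local ring of germs of real-analytic functions at $Q$ is isomorphic to $\scrO_0$ via a local uniformizer (this uses that $C$ is nonsingular), and $f$ is nonnegative in a neighborhood of $\{Q\}\times\R$. Hence Lemma \ref{psdsosscro} produces an open neighborhood $U_Q\subset C(\R)$ of $Q$ together with elements $g_{Q,1},\dots,g_{Q,m}\in\scrO(U_Q)[y]$ satisfying $f=\sum_k g_{Q,k}^2$ on $U_Q\times\R$. Extracting a finite subcover $U_1,\dots,U_N$ by compactness, and shrinking if necessary, I arrange that each $P_l$ lies in a unique $U_{i_l}$, that an open neighborhood $V_l$ of $P_l$ satisfies $\ol{V_l}\subset U_{i_l}$, and that $V_l\cap U_j=\emptyset$ for $j\ne i_l$.

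Next, I build a continuous partition of unity $\chi_1,\dots,\chi_N$ subordinate to the cover $\{U_i\}$, with $\supp(\chi_i)$ having compact closure in $U_i$, and with $\chi_{i_l}\equiv 1$ on $V_l$ (whence automatically $\chi_j\equiv 0$ on $V_l$ for $j\ne i_l$). Such a partition exists by the usual constructions on the compact real-analytic one-manifold $C(\R)$. Because every $\chi_i$ is locally constant in a neighborhood of each $P_l$, each $\chi_i$ belongs to $A$. Setting
\[
\mu_i\>=\>\frac{\chi_i}{\sqrt{\chi_1^2+\cdots+\chi_N^2}},
\]
the denominator is bounded below by $1/\sqrt N$ (since $\sum_j\chi_j\equiv 1$) and is locally constant near each $P_l$, so each $\mu_i$ again lies in $A$, has $\supp(\mu_i)\subset U_i$, and satisfies $\sum_i\mu_i^2\equiv 1$.

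Finally, I combine everything to write $f=\sum_i\mu_i^2 f=\sum_{i,k}(\mu_i g_{i,k})^2$, and verify that each $\mu_i g_{i,k}$ lies in $A[y]$: its coefficients, extended by zero outside $U_i$, are continuous on $C(\R)$ (because $\supp(\mu_i)$ has compact closure in $U_i$, where the coefficients of $g_{i,k}$ are bounded) and real-analytic in a neighborhood of every $P_l$ (either $\mu_i$ and $g_{i,k}$ are both analytic there when $P_l\in U_i$, or $\mu_i\equiv 0$ near $P_l$, in which case the product is identically zero). The hardest part of the argument is the construction in the previous step: the $\chi_i$ must be simultaneously continuous on $C(\R)$, locally constant near each $P_l$ (to secure analyticity there), and compactly supported inside $U_i$ (so that the patched coefficients extend continuously across $\partial U_i$). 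These constraints are compatible precisely because the $P_l$ are finite in number and $C(\R)$ is a compact one-manifold.
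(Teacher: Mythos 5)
Your proposal is correct and follows essentially the same route as the paper's proof: apply Lemma \ref{psdsosscro} to get local sum-of-squares decompositions on a finite cover, arrange the cover so that each $P_l$ lies in exactly one chart and away from the closures of the others, and glue with a partition of unity that is therefore locally constant (hence analytic) near each $P_l$. The paper uses the $\beta_i$ directly as weights (so takes $\sqrt{\beta_i}$), while you normalize by $\sqrt{\sum_j\chi_j^2}$ to avoid square roots of the partition functions, but this is a cosmetic variation of the same argument.
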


Here we consider $\R[V]=\R[C][y]$ as a subring of the polynomial
ring $A[y]$ in the natural way.

\begin{proof}
Fix a point $P\in C(\R)$. By Lemma \ref{psdsosscro} there exists an
open neighborhood $U$ of $P$ in $C(\R)$ such that the restriction of
$f$ to $U\times\R$ is a sum of squares in $\scrO(U)[y]$. Hence there
exists a finite covering $C(\R)=U_1\cup\cdots\cup U_m$ by open sets,
together with a sum of squares decomposition of $f|_{U_i\times\R}$ in
the ring $\scrO(U_i)[y]$, for every $i=1,\dots,m$. We can arrange
that $m\ge r$ and $P_i\in U_i$, $P_i\notin\ol{U_j}$ for $i\ne j$ and
$1\le i\le r$, $1\le j\le m$. Let $\beta_i\colon C(\R)\to[0,1]$
($i=1,\dots,m$) be continuous functions forming a partition of unity
and satisfying $\supp(\beta_i)\subset U_i$ for all~$i$.
Now we take the weighted sum of the sum of squares representations of
$f|_{U_i\times\R}$ in $\scrO(U_i)[y]$, using the $\beta_i$ as
weights. The resulting identity is a sum of squares decomposition of
$f$ in $A[y]$.
\end{proof}

Fix a sum of squares decomposition of $f$ in $A[y]$, as in Lemma
\ref{sosinay}. The involved polynomials have coefficients that are
elements of~$A$. We want to approximate these coefficients by regular
functions on $C$. To do this we need some preparations.

\begin{lab}\label{normvgl}%
Let $\R[y]_{\le m}$ be the space of real polynomials of degree at
most~$m$. If $f=\sum_{i=0}^ma_iy^i$ with $a_i\in\R$, we write
$c(f)=\max\{|a_i|\colon i=0,\dots,m\}$ and $||f||=\max\{|f(t)|\colon
-1\le t\le1\}$. Since any two norms on $\R[y]_{\le m}$ are
equivalent, there exist real numbers $\alpha_m,\,\beta_m>0$ such that
$||f||\le\alpha_mc(f)$ and $c(f)\le\beta_m||f||$ for every
$f\in\R[y]_{\le m}$. Clearly one may take $\alpha_m=m+1$.
A concrete value for $\beta_m$ can be deduced easily from Markov's
inequality $||f'||\le m^2||f||$.
For $f,\,g\in\R[y]_{\le m}$, note that $c(fg)\le(m+1)c(f)c(g)$.
\end{lab}

\begin{lem}\label{vglkoeffdiffquad}%
For $i=1,\dots,k$, let $f_i,\,g_i\in\R[y]$ be of degree~$\le m$, and
let $f=\sum_{i=1}^kf_i^2$, $g=\sum_{i=1}^kg_i^2$. If $\epsilon\ge0$
is such that $c(g_i-f_i)\le\epsilon$ for $i=1,\dots,k$, then
$$c(g-f)\>\le\>(m+1)k\epsilon\cdot
\Bigl(\epsilon+2\beta_m\sqrt{||f||}\Bigr).$$
\end{lem}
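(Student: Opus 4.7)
The plan is to write $h_i := g_i - f_i$, so that by hypothesis $c(h_i)\le\epsilon$ for each $i$, and to expand the difference as
$$g-f\>=\>\sum_{i=1}^k(g_i^2-f_i^2)\>=\>\sum_{i=1}^k\bigl(2f_ih_i+h_i^2\bigr).$$
Applying subadditivity of $c$ together with the submultiplicative estimate $c(uv)\le(m+1)c(u)c(v)$ recorded in \ref{normvgl}, each summand satisfies
$$c\bigl(2f_ih_i+h_i^2\bigr)\>\le\>2(m+1)c(f_i)c(h_i)+(m+1)c(h_i)^2\>\le\>(m+1)\epsilon\bigl(2c(f_i)+\epsilon\bigr).$$

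The only step that is not pure norm bookkeeping is to replace $c(f_i)$ by a quantity involving $||f||$. For this I would use the pointwise bound $f_i(t)^2\le\sum_jf_j(t)^2=f(t)$ valid for $t\in[-1,1]$, which forces $||f_i||\le\sqrt{||f||}$. Combined with the second norm comparison in \ref{normvgl} this yields $c(f_i)\le\beta_m||f_i||\le\beta_m\sqrt{||f||}$. Substituting into the displayed estimate above and summing the $k$ individual bounds then produces exactly $c(g-f)\le(m+1)k\epsilon\bigl(\epsilon+2\beta_m\sqrt{||f||}\bigr)$.

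There is no substantive obstacle here: the lemma is a quantitative estimate whose proof reduces to a careful application of the two norm-comparison constants $\alpha_m,\beta_m$ from \ref{normvgl}. The one conceptually relevant ingredient is the observation $|f_i|\le\sqrt{f}$ on $[-1,1]$, which is what makes the individual bounds on $c(f_i)$ add up cleanly in terms of the single quantity $||f||$ appearing in the statement.
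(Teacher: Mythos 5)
Your proof is correct and follows essentially the same path as the paper's: expand $g-f$ term by term, control each $c(g_i^2-f_i^2)$ using the submultiplicative bound $c(uv)\le(m+1)c(u)c(v)$ from \ref{normvgl}, and then convert $c(f_i)$ into a quantity involving $||f||$ via $c(f_i)\le\beta_m||f_i||$ and the pointwise observation $||f_i||^2\le||f||$. The only cosmetic difference is in the middle step: the paper factors $g_i^2-f_i^2=(g_i-f_i)(g_i+f_i)$ and applies submultiplicativity once, while you expand $g_i^2-f_i^2=2f_ih_i+h_i^2$ and apply it twice together with subadditivity; both routes land on the same intermediate bound $(m+1)\epsilon(\epsilon+2c(f_i))$, so this is a trivial reorganization rather than a genuinely different argument.
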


\begin{proof}
See \ref{normvgl} for notation. Fix $i\in\{1,\dots,k\}$. Using
$g_i^2-f_i^2=(g_i-f_i)(g_i+f_i)$ we get
$$c(g_i^2-f_i^2)\>\le\>(m+1)\,c(g_i-f_i)\,c(g_i+f_i)\>\le\>(m+1)
\epsilon\cdot\bigl(\epsilon+2c(f_i)\bigr)$$
Since $c(f_i)\le\beta_m||f_i||$ and $||f_i||^2\le||f||$, this implies
$$c(g_i^2-f_i^2)\>\le\>(m+1)\epsilon\cdot
(\epsilon+2\beta_m\sqrt{||f||}).$$
Now the assertion follows using $c(g-f)\le\sum_{i=1}^k
c(g_i^2-f_i^2)$.
\end{proof}

\begin{lem}\label{mylem}%
Let $P_1,\dots,P_r\in C(\R)$, and let $\varphi,\,\psi\in
A=A(P_1,\dots,P_r)$ be such that $\psi-\varphi$ is nonnegative on
$C(\R)$ and vanishes at most in $P_1,\dots,P_r$. Then there exists a
regular function $p\in\R[C]$ with $\varphi\le p\le\psi$ on $C(\R)$.
\end{lem}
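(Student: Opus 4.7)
Write $h:=\psi-\varphi$ and $m:=(\varphi+\psi)/2$; the claim is equivalent to producing $p\in\R[C]$ with $|p-m|\le h/2$ on $C(\R)$. Since $h$ is analytic in a neighbourhood of each $P_i$, nonnegative, and strictly positive off $\{P_1,\dots,P_r\}$, the vanishing order of $h$ at each $P_i$ is a well-defined even nonnegative integer $2k_i$ (set $k_i=0$ whenever $h(P_i)>0$). My plan is to build $p$ in the form $p_0+Hr$ with $p_0,H,r\in\R[C]$: the summand $p_0$ will be chosen so as to kill the Taylor expansion of $m$ at each $P_i$ through order $2k_i-1$, $H$ will be a sum of two squares with controlled vanishing at the $P_i$, and $r$ will come from a Stone--Weierstrass approximation.

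Applying the Chinese Remainder Theorem to the pairwise coprime ideals $\mathfrak{m}_{P_i}^{2k_i}\subset\R[C]$, I first choose $p_0\in\R[C]$ whose Taylor expansion at each $P_i$ matches that of the analytic function $m$ through order $2k_i-1$. The function $f:=m-p_0$ is then continuous on $C(\R)$, analytic near each $P_i$, and vanishes at $P_i$ to order $\ge 2k_i$. Next, fixing for each $i$ an element $t_i\in\R[C]$ whose class generates the local maximal ideal at $P_i$, CRT supplies $g_1\in\R[C]$ with $g_1\equiv t_i^{k_i}\pmod{\mathfrak{m}_{P_i}^{k_i+1}}$; this forces $\operatorname{ord}_{P_i}g_1=k_i$ with positive leading coefficient. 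The zeros of $g_1$ in $C(\R)\setminus\{P_1,\dots,P_r\}$ form a finite set $Z$, and a further CRT step produces $g_2\in\R[C]$ with the same jets at the $P_i$ as $g_1$ but satisfying $g_2\equiv1\pmod{\mathfrak{m}_x}$ for each $x\in Z$. The regular function $H:=g_1^2+g_2^2$ is then $\ge0$ on $C(\R)$, strictly positive off $\{P_1,\dots,P_r\}$, and vanishes at each $P_i$ to order exactly $2k_i$.

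Since $f$ and $h$ both vanish at each $P_i$ to order at least $\operatorname{ord}_{P_i}H=2k_i$, the ratios $f/H$ and $h/H$ extend continuously to all of $C(\R)$; moreover, the extension of $h/H$ is strictly positive everywhere (at each $P_i$ it equals a ratio of positive leading coefficients), so by compactness $c:=\min_{C(\R)}(h/H)>0$. Because $\R[C]$ is dense in $C(C(\R),\R)$ by Stone--Weierstrass, I then pick $r\in\R[C]$ with $\|r-f/H\|_\infty<c/2$ and set $p:=p_0+Hr\in\R[C]$. Using the continuous extension of $f/H$, the identity $p-m=H(r-f/H)$ holds on all of $C(\R)$, so
$$|p(x)-m(x)|\>=\>H(x)\,|r(x)-(f/H)(x)|\>\le\>\tfrac c2\,H(x)\>\le\>\tfrac{h(x)}2,$$
using $H\le h/c$ (which is what $h/H\ge c$ means). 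Hence $\varphi\le p\le\psi$ on $C(\R)$, as required. The hard part is the construction of $H$: we need a globally defined nonnegative regular function on $C$ whose zero set in $C(\R)$ is exactly $\{P_1,\dots,P_r\}$ and whose vanishing orders there match those of $h$, and CRT is the tool that compensates for the possible failure of unique factorization in $\R[C]$.
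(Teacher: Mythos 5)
Your proof is correct, but it takes a genuinely different route from the paper's. The paper proceeds by induction on $r$: it picks one point $P$ with $\varphi(P)=\psi(P)$, chooses $q\in\R[C]$ matching the jets of $\varphi$ and $\psi$ at $P$ to order $\ge 2k$ and $t\in\R[C]$ with a double zero at $P$ and $t>0$ elsewhere, divides by $t^k$ to obtain functions $a\le b$ in $A$ with $a(P)<b(P)$, applies the inductive hypothesis to get $p'$ with $a\le p'\le b$, and sets $p=t^kp'+q$; the base case (strict inequality everywhere) is plain Weierstrass approximation. You instead handle all $r$ points simultaneously: you match the jets of $m=(\varphi+\psi)/2$ at every $P_i$ via CRT, build by CRT a nonnegative $H=g_1^2+g_2^2\in\R[C]$ whose real zero set and vanishing orders agree exactly with those of $h=\psi-\varphi$, observe that $h/H$ extends to a strictly positive continuous function on the compact set $C(\R)$ while $f/H=(m-p_0)/H$ extends continuously, and conclude with a single Stone--Weierstrass approximation of $f/H$. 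Your construction of $H$ is, in effect, the several-point generalization of the element $t$ whose existence the paper's proof invokes without comment; it is the same device that compensates for the possible failure of unique factorization in $\R[C]$. The paper's approach is slightly more economical (it only ever needs a function with a double zero at a single point and never touches $H$-type constructions of higher order), while yours is more explicit, avoids the induction, and makes the Stone--Weierstrass step appear only once, with a clean quantitative estimate. One small imprecision in your closing sentence: the real zero set of $H$ is $\{P_i\colon k_i>0\}$, not necessarily all of $\{P_1,\dots,P_r\}$; this does not affect the argument, since ``vanishing order $2k_i$'' already encodes $H(P_i)\ne0$ when $k_i=0$.
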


\begin{proof}
This is similar to Lemma 4.5 in \cite{ma}. If $\varphi<\psi$ on
$C(\R)$, the assertion follows from Weierstra\ss\ approximation.
Otherwise one proceeds by induction on~$r$. Let $P\in C(\R)$
with $\varphi(P)=\psi(P)$, and let $2k>0$ be the vanishing order of
$\psi-\varphi$ at $P$ (note that $\psi-\varphi$ is analytic locally
around~$P$). There exists $t\in\R[C]$ such that $t$ has vanishing
order two at $P$ and $t>0$ on $C(\R)\setminus\{P\}$.
Moreover there exists $q\in\R[C]$ such that $\varphi-q$ and $\psi-q$
vanish at $P$ of order $\ge2k$.
Hence we can define real functions $a,\,b$ on $C(\R)$ by
$$a(x)\>=\>\frac{\varphi(x)-q(x)}{t(x)^k},\quad b(x)\>=\>
\frac{\psi(x)-q(x)}{t(x)^k}\quad(x\in C(\R))$$
Clearly $a,\,b\in A$, we have $a\le b$ on $C(\R)$, and $a(P)<b(P)$.
So by induction there exists $p'\in\R[C]$ with $a\le p'\le b$ on
$C(\R)$. Hence $p:=t^kp'+q$ will do the job.
\end{proof}

\begin{lem}\label{almostsosdecomp}%
Let $f\in\R[V]$ with $f\ge0$ on $V(\R)$ and $\deg_y(f)=d$. Then for
any $0\ne p\in\R[C]$ with $p\ge0$ on $C(\R)$, there is a
decomposition $f=g+\sum_{i=0}^da_iy^i$, where $g\in\R[V]$ is a sum of
squares in $\R[V]$ and $a_0,\dots,a_d\in\R[C]$ satisfy $|a_i|\le p$,
pointwise on $C(\R)$.
\end{lem}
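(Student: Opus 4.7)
My approach is to convert an analytic sum-of-squares decomposition of $f$, supplied by Lemma \ref{sosinay}, into a regular one by approximating each analytic coefficient by a polynomial one, with error proportional to the given weight $p$. Since $p\ne0$ and $C$ is a curve, the zero set of $p$ on $C(\R)$ is finite; call it $\{P_1,\dots,P_r\}$, and set $A=A(P_1,\dots,P_r)$. Lemma \ref{sosinay} yields a representation $f=\sum_{j=1}^k h_j^2$ in $A[y]$. By Lemma \ref{leadcoeff}(a), $d$ is even; moreover, if the maximum of $\deg_y h_j$ were some $M>d/2$, the coefficient of $y^{2M}$ in $f$, which vanishes, would be a sum in $A$ of squares of the corresponding leading coefficients, forcing each such $h_j$ to have vanishing leading coefficient in $A$ (as $A$ is a ring of functions on $C(\R)$). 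So $\deg_y h_j\le d/2$ for all $j$, and I may write $h_j=\sum_{i=0}^{d/2}\varphi_{ji}y^i$ with $\varphi_{ji}\in A$.

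For a small $\epsilon>0$ to be chosen later, I apply Lemma \ref{mylem} separately to each pair $\varphi_{ji}-\epsilon p\le\varphi_{ji}+\epsilon p$. The difference $2\epsilon p$ is nonnegative on $C(\R)$ and vanishes exactly on $\{P_1,\dots,P_r\}$, so the lemma produces $p_{ji}\in\R[C]$ with $|p_{ji}-\varphi_{ji}|\le\epsilon p$ pointwise on $C(\R)$. Put $g_j=\sum_ip_{ji}y^i\in\R[C][y]$ and $g=\sum_{j=1}^kg_j^2\in\R[V]$. Then $g$ is a sum of squares in $\R[V]$ with $\deg_y g\le d$, so $f-g=\sum_{i=0}^da_iy^i$ with $a_i\in\R[C]$.

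It remains to bound the $a_i$ by $p$. I apply Lemma \ref{vglkoeffdiffquad} fibrewise at each $x\in C(\R)$, with $m=d/2$ and coefficient error bound $\epsilon p(x)$, obtaining
$$c\bigl(g(x,\cdot)-f(x,\cdot)\bigr)\>\le\>\bigl(\tfrac d2+1\bigr)k\,\epsilon\,p(x)\,\Bigl(\epsilon\,p(x)+2\beta_{d/2}\sqrt{||f(x,\cdot)||}\,\Bigr).$$
Both $p$ and $x\mapsto||f(x,\cdot)||$ are continuous, hence bounded on the compact set $C(\R)$; so the right-hand side is at most $C\epsilon\,p(x)$ for a constant $C$ which remains bounded as $\epsilon\to0$. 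Fixing $\epsilon$ so small that $C\epsilon\le1$ gives $|a_i(x)|\le p(x)$ for every $x\in C(\R)$ and every $i=0,\dots,d$, as required.

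The only delicate point is transferring the real-coefficient comparison in Lemma \ref{vglkoeffdiffquad} to a pointwise statement on $C(\R)$, which is handled just by evaluating at each $x$ and using compactness. The key choice that makes the argument work is taking $P_1,\dots,P_r$ to be exactly the zero set of $p$, so that Lemma \ref{mylem} can be invoked with bracket width $\epsilon p$ rather than a uniform $\epsilon$; this is what makes the final coefficient error proportional to $p$ instead of merely small.
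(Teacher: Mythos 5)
Your proposal is correct and follows essentially the same route as the paper: choose $P_1,\dots,P_r$ to be the zeros of $p$, apply Lemma \ref{sosinay} to get an analytic sum-of-squares representation, approximate each coefficient within $\epsilon p$ using Lemma \ref{mylem}, and close with the estimate from Lemma \ref{vglkoeffdiffquad} and compactness of $C(\R)$. The only addition is your explicit justification that $\deg_y h_j\le d/2$ (leading-coefficient cancellation in the formally real ring $A$), a detail the paper leaves implicit but which is correct.
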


\begin{proof}
Let $A=A(P_1,\dots,P_r)$ be the ring of \ref{contanalytfcts}, where
$P_1,\dots,P_r\in C(\R)$ are the real zeros of $p$. The degree $d$ is
even, say $d=2m$. By Lemma \ref{sosinay} there is a sum of squares
decomposition $f=f_1^2+\cdots+f_k^2$ with $f_i\in A[y]$, say
$$f_i\>=\>\sum_{j=0}^mb_{ij}y^j$$
with $b_{ij}\in A$.
Let $\lambda>0$ be a real parameter that will be adjusted later. By
Lemma \ref{mylem} there exist regular functions $q_{ij}\in\R[C]$
such that $\bigl|q_{ij}-b_{ij}\bigr|\le\lambda p$ on $C(\R)$, for
$1\le i\le k$ and $0\le j\le m$.
Put
$$g_i\>:=\>\sum_{j=0}^mq_{ij}y^j$$
($i=1,\dots,k$), let $g:=g_1^2+\cdots+g_k^2\in\R[V]$ and write $f-g=
\sum_{i=0}^da_iy^i$ with $a_i\in\R[C]$. We can estimate the $|a_i|$
as follows. Let $\gamma>0$ be a real number such that $f(x,y)\le
\gamma^2$ for $x\in C(\R)$ and $|y|\le1$. Using Lemma
\ref{vglkoeffdiffquad} we get
$$|a_i(x)|\>\le\>(m+1)k\cdot\lambda p(x)\cdot
\Bigl(\lambda p(x)+2\beta_m\gamma\Bigr),\quad x\in C(\R).$$
For $\lambda>0$ sufficiently small, the right hand side is less or
equal to $p(x)$, uniformly for all $x\in C(\R)$. This proves the
lemma.
\end{proof}

\begin{lab}\label{endproofthm1}%
We now give the proof of Theorem~1. So let $C$ be a nonsingular
affine curve over $\R$ with $C(\R)$ compact, and let $V=C\times\A^1$.
Let $f\in\R[V]$ with $f\ge0$ on $V(\R)$ and with only finitely many
zeros in $V(\R)$, and let $\deg_y(f)=2m$. Fix a strictly positive
polynomial $s\in\R[y]$ with $\deg(s)=2m$, for example $s=y^{2m}+1$.
By Lemma \ref{sepsosdrunter} there exists a sum of squares $p\ne0$ in
$\R[C]$ with $ps\le f$ on $V(\R)$. Let
$$t\>:=\>\sum_{i=0}^{2m}y^i+2\sum_{j=0}^my^{2j}\>=\>3+y+3y^2+y^3+
\cdots+3y^{2m},$$
a polynomial in $\R[y]$ with $\deg(t)=2m$.
There is a real number $c>0$ such that the polynomial $s-ct$ is
nonnegative (and hence a sum of squares) in $\R[y]$, since $s$ is
strictly positive and $\deg(t)=\deg(s)$. By Lemma
\ref{almostsosdecomp}, applied to $f-ps\in\R[V]$ and
$\frac c3p\in\R[C]$, there is a sum of squares $g$ in $\R[V]$ such
that
$$f-ps\>=\>g+\sum_{i=0}^{2m}b_iy^i$$
with $b_i\in\R[C]$ for which $3|b_i|\le cp$ holds on $C(\R)$
($i=0,\dots,2m$). We now mimick Marshall's marvelous decomposition
(last two pages of \cite{ma}), thereby proving that $f$ is a sum of
squares: We have $f=g+h_1+h_2$ where $h_1=(s-ct)p$ and
$h_2=ctp+\sum_{i=0}^{2m}b_iy^i$. Clearly $h_1$ is a sum of squares in
$\R[y]$. And $h_2$ is a sum of squares in $\R[V]$, since $h_2$ is the
sum of the following polynomials:
$$b_0-b_1+2cp,\quad\Bigl(b_{2m}-b_{2m-1}+2cp\Bigr)y^{2m},$$
$$\Bigl(b_i+cp\Bigr)\,y^{i-1}(1+y+y^2)\quad(\text{for }0<i<2m,\
i\text{ odd}),$$
$$\Bigl(b_i-b_{i-1}-b_{i+1}+cp\Bigr)y^i\quad(\text{for }0<i<2m,\
i\text{ even}).$$
Each of these is a psd polynomial in $y$, times an element of $\R[C]$
that is nonnegative on $C(\R)$ (and that is hence, by \cite{sch:mz},
a sum of squares in $\R[C]$). The reason is $3|b_i|\le cp$ on $C(\R)$
for all~$i$. Theorem~1 is proved.
\qed
\end{lab}

\begin{rem}\label{norealsings}%
In Theorem~1 we may relax the hypothesis by allowing the curve $C$ to
have singularities in nonreal points. The proof given above carries
over verbatim to this more general case.
\end{rem}

\begin{lab}\label{prepsthm1gen}%
Generalizing the setup of Theorem~1, one may ask if the compact curve
$C(\R)$ can be replaced by a compact semi-algebraic set $K$ on some
curve. Hereby sums of squares need to be replaced by elements of a
suitable preorder. Such generalizations are indeed possible, as we'll
indicate now. We are content with a straightforward formulation
and do not strive for the most general version.

Let $C$ be a nonsingular affine curve over $\R$, and let $K\subset
C(\R)$ be a compact semi-algebraic subset without isolated points.
By \cite{sch:mz} Theorem 5.22, the saturated preorder
$$\scrP(K)\>:=\>\bigl\{p\in\R[C]\colon p\ge0\text{ on }K\bigr\}$$
in $\R[C]$ can be generated by a single element $h\in\R[C]$.
Indeed,
there exists $h\in\R[C]$ with $K=\{x\in C(\R)\colon h(x)\ge0\}$ such
that $h$ has vanishing order~$1$ at every boundary point of $K$, and
has no other zeros in $C(\R)$. Any such $h$ will generate the
preorder $\scrP(K)$ in $\R[C]$, according to \cite{sch:mz}.
\end{lab}

\begin{cor}\label{thm1gend}%
Let $C$ be a nonsingular affine curve over $\R$, and let $K\subset
C(\R)$ be a compact semi-algebraic set without isolated points.
Let $h\in\R[C]$ generate the preorder $\scrP(K)$ in $\R[C]$.
If $f\in\R[C\times\A^1]=\R[C][y]$ satisfies
$f\ge0$ on $K\times\R$, and if $f$ has only finitely many zeros in
$K\times\R$, there are sums of squares $g_0,\,g_1$ in $\R[C][y]$ such
that $f=g_0+g_1h$.
\end{cor}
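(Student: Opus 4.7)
The plan is to reduce to Theorem~1 by passing to a branched double cover $C'\to C$ on which $h$ acquires a square root, and then descending along the associated Galois involution. First, I would replace $h$ by the ``nice'' generator from \ref{prepsthm1gen}, namely one vanishing to order exactly~$1$ at each boundary point of $K$ and having no other zeros on $C(\R)$. This replacement is harmless: any two generators $h,\,h'$ of $\scrP(K)$ satisfy identities $h=s_0+s_1h'$ with $s_i$ sums of squares, so a decomposition $f=g_0+g_1h'$ can be rewritten as $f=(g_0+g_1s_0)+(g_1s_1)h$.

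Next I would form the affine curve $C'\subset C\times\A^1$ defined by $w^2=h(x)$, with projection $\pi\colon C'\to C$. At a real point with $w\ne0$ the equation is smooth in~$w$; at a real point with $w=0$, the simple-zero assumption on $h$ makes $w^2-h$ cut out a regular local ring on $C'$, so $C'$ has no real singularities, and Theorem~1 still applies by Remark \ref{norealsings}. The map $\pi$ sends $C'(\R)$ onto $\{x\in C(\R)\colon h(x)\ge0\}=K$ with fibres of size $1$ or~$2$, hence $C'(\R)$ is compact.

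The pullback $F\in\R[C'][y]$ of $f$ is nonnegative on $C'(\R)\times\R$ and has only finitely many zeros there, since the zero set of $f$ in $K\times\R$ is finite and $\pi\times\mathrm{id}$ has finite fibres. Theorem~1 then yields a sum of squares decomposition $F=\sum_i F_i^2$ in $\R[C'][y]$. Writing each $F_i=a_i+b_iw$ uniquely with $a_i,\,b_i\in\R[C][y]$, and averaging with the Galois involution $w\mapsto-w$ (which fixes the pullback of $f$), the cross terms cancel and I obtain
$$f\>=\>\sum_i a_i^2\>+\>h\sum_i b_i^2,$$
so $g_0:=\sum_i a_i^2$ and $g_1:=\sum_i b_i^2$ are the desired sums of squares in $\R[C][y]$.

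The main obstacle is ensuring $C'$ is an admissible input for Theorem~1, namely nonsingularity at real points and compactness of $C'(\R)$; both become delicate exactly at the boundary of $K$, and both are rescued by the simple-zero property of the chosen~$h$. A minor subcase is when $h$ is already a square in $\R[C]$: then $C'$ splits as a disjoint union of two copies of $C$, Theorem~1 applies on each copy, and one simply takes $g_1=0$.
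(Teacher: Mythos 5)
Your proposal follows essentially the same route as the paper: pass to the double cover $C'$ with $\R[C']=\R[C][z]/(z^2-h)$, invoke Theorem~1 together with Remark~\ref{norealsings}, and expand the resulting sum of squares $\sum_i(a_i+b_iz)^2$ to read off $g_0=\sum_i a_i^2$ and $g_1=\sum_i b_i^2$ (your Galois-averaging is just a repackaging of the observation that the $z$-linear term must vanish because $\{1,z\}$ is a free $\R[C][y]$-basis). The only slip is in your preliminary reduction: to rewrite $f=g_0+g_1h'$ as a decomposition in terms of $h$, you need an identity $h'=s_0+s_1h$ rather than $h=s_0+s_1h'$; with that corrected, your argument is sound and is, if anything, slightly more explicit than the paper's about why $C'$ is real-nonsingular and compact.
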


\begin{proof}
Corollary \ref{thm1gend} could be proved by inspecting each step in
the proof of Theorem~1 and replacing it a suitably generalized
version. It is however easier to obtain \ref{thm1gend} as a direct
corollary to Theorem~1:

Let $C'$ be the affine curve with coordinate ring $\R[C']=\R[C][z]/
(z^2-h)$, and let $C'\to C$ be the natural morphism. Then $f$,
considered as an element of $C'[y]$, is nonnegative on $C'(\R)
\times\R$ and has only finitely many zeros there. Since the curve
$C'$ has no real singularities, it follows from Theorem~1 (observe
Remark \ref{norealsings}) that $f$ is a sum of squares in $R[C']$. So
we have $f=\sum_i(a_i+b_iz)^2$ with $a_i,\,b_i\in\R[C][y]$ (and
$z^2=h$). Expanding this expression shows $f=\sum_ia_i^2+h\sum_i
b_i^2$ in $\R[C]$.
\end{proof}


\section{Proof of Theorem~2}

In the following let $C$ be the plane real curve with equation
$x_1^2+x_2^2=1$. Let $V=C\times\A^1$, so $\R[V]=\R[C][y]$ is the
polynomial ring over $\R[C]=\R[x_1,x_2]/(x_1^2+x_2^2-1)$.

\begin{lem}\label{factorizeoncurve}%
Let $0\ne p\in\R[C]$ with $p\ge0$ on $C(\R)$. There exist
$p_1,\,p_2\in\R[C]$, both nonnegative on $C(\R)$, with $p=p_1p_2$ and
such that $p_1$ has only real zeros on $C$, while $p_2$ has no real
zeros.
\end{lem}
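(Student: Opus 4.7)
The plan is to factor $p$ according to the splitting of its divisor on $C$ into real and non-real components, and then fix signs using the connectedness of $C(\R)$. I would begin by decomposing the effective divisor of $p$ on $C$ as $\div_C(p)=D_r+D_c$, where $D_r$ (resp.\ $D_c$) collects the contributions from the real (resp.\ non-real) closed points of~$C$. The goal is to realize each of $D_r$ and $D_c$ as the divisor of some $p_1,\,p_2\in\R[C]$; once this is achieved one has $p=\lambda p_1p_2$ for some unit $\lambda$, and a short sign adjustment finishes.

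The key step is a Picard group computation. The projective closure $\bar C$ is a smooth conic over~$\R$ with real points, hence $\bar C\cong\P^1_\R$ and $\Pic(\bar C)\cong\Z$ via the degree. The complement $\bar C\setminus C$ consists of the conjugate pair $[1:\pm i:0]$, which forms a single closed point $P_\infty$ of degree~$2$. Therefore $\Pic(C)\cong\Z/2\Z$, and the class of a divisor on $C$ is detected by the parity of its degree. Every non-real closed point of $C$ has residue field $\C$, so $\deg(D_c)$ is automatically even; writing $\div_{\bar C}(p)=D_r+D_c+nP_\infty$ for some $n\in\Z$ and using that principal divisors on $\bar C$ have total degree zero, one sees that $\deg(D_r)$ is also even. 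Thus both $D_r$ and $D_c$ are principal on $C$, yielding $p_1,\,p_2\in\R[C]$ with the prescribed divisors.

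For the final sign adjustment I would note that $\R[C]^\times=\R^\times$: under the isomorphism $\R[C]\otimes_\R\C\cong\C[u,u^{-1}]$ with $u=x_1+ix_2$, the only units fixed by the involution $u\mapsto1/\bar u$ (corresponding to complex conjugation on $C(\C)$) are the nonzero real scalars. Hence $p=\lambda p_1p_2$ with $\lambda\in\R^\times$, and absorbing $\lambda$ into $p_1$ gives $p=p_1p_2$ on the nose. Since $p_2$ has no real zeros and $C(\R)$ is a connected circle, $p_2$ has constant sign on $C(\R)$; replacing both $p_1$ and $p_2$ by their negatives if necessary makes $p_2>0$ on $C(\R)$, whence $p_1=p/p_2\ge0$ on $C(\R)$ as well. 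The conceptual heart of the argument is the parity criterion from $\Pic(C)=\Z/2\Z$ which forces both halves of $\div_C(p)$ to be individually principal; this is presumably the ``homological argument'' alluded to in the introduction, and it would fail for curves of higher genus.
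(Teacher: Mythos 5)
Your proof is correct, but it takes a genuinely different route from the paper's. The paper gives a short, elementary argument: for each real zero $\xi$ of $p$, the tangent line to the conic $C$ at $\xi$ restricts to an element $q\in\R[C]$ with a double zero at $\xi$ and no other zeros on $C$, and (up to sign) $q\ge0$ on $C(\R)$. Since the vanishing order of $p$ at $\xi$ is even, one can repeatedly divide by such $q$'s and induct, so that $p_1$ is built explicitly as a product of tangent lines. You instead deduce the existence of $p_1,p_2$ abstractly from $\Pic(C)\cong\Z/2\Z$: both $D_r$ and $D_c$ have even degree, hence are principal, and effectivity plus normality of $\R[C]$ forces the generators to be regular functions. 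Your sign adjustment (units are nonvanishing, $C(\R)$ is connected) is sound, and you could even skip the explicit identification $\R[C]^\times=\R^\times$ -- any unit has constant sign on the connected circle. Both proofs are valid; the paper's is shorter and more concrete, while yours isolates the structural reason ($\Pic(C)$ is $2$-torsion) and is, as you observe, essentially the same divisor-class mechanism the paper deploys a little later in Lemma~\ref{cptdivzero}. That is indeed the ``homological argument'' the introduction alludes to, and both approaches break down for curves of positive genus.
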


\begin{proof}
Let $\xi\in C(\R)$ with $p(\xi)=0$. The vanishing order of $p$ at
$\xi$ is even, so by induction it suffices to show that there exists
$q\in\R[C]$ with a double zero in $\xi$ and with no other zeros in
$C$. But this is clear, one can take $q$ to be the tangent to $C$
at~$\xi$.
\end{proof}

Note that there is no analogue of Lemma \ref{factorizeoncurve} when
the curve $C$ has positive genus.

\begin{lem}\label{standardlem}%
Let $f\in\R[V]$ and $b\in\R[C]$ be sums of squares, and assume that
$b$ has only real zeros on $C$. If there is $g\in\R[V]$ with $f=bg$,
then $g$ is a sum of squares in $\R[V]$ as well.
\end{lem}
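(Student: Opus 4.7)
The plan is a direct valuation-theoretic argument. Since $\R[V] = \R[C][y]$ is regular Noetherian (a polynomial ring over the Dedekind domain $\R[C]$) and hence a Krull domain, one has $\R[V] = \bigcap_p \R[V]_p$ over the height-one primes $p$. Writing $b = \sum_i b_i^2$ in $\R[C]$ and $f = \sum_j f_j^2$ in $\R[V]$, the identity
\[
b^2 g \;=\; bf \;=\; \Bigl(\sum_i b_i^2\Bigr)\Bigl(\sum_j f_j^2\Bigr) \;=\; \sum_{i,j} (b_i f_j)^2
\]
in the fraction field $\R(V)$ gives $g = \sum_{i,j} c_{ij}^2$ with $c_{ij} := b_i f_j / b$. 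The entire argument reduces to showing each $c_{ij}$ lies in $\R[V]$, for then $g$ is manifestly a sum of squares in $\R[V]$.

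For any height-one prime $p$ with $b \notin p$, one has $v_p(b) = 0$ and the inequality $v_p(c_{ij}) = v_p(b_i) + v_p(f_j) \geq 0$ is trivial. If instead $b \in p$, then $V(p)$ is a one-dimensional irreducible subvariety of $V(b) = Z(b)\times\A^1$ and hence equals $\{\xi\}\times\A^1$ for some $\xi \in Z(b)$. Here the hypothesis that $b$ has only real zeros on $C$ is crucial: it forces $\xi \in C(\R)$, and so the residue field of the DVR $\R[V]_p$ is $\R(y)$, which is formally real.

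Over a discrete valuation with formally real residue field, the standard sum-of-squares formula $v_p(\sigma_1^2 + \cdots + \sigma_r^2) = 2\min_k v_p(\sigma_k)$ holds for $\sigma_k \in \R[V]$, because a nonzero sum of squares in a formally real field is nonzero. Applied to $b$ and $f$, it yields $v_p(b) = 2\min_i v_p(b_i)$ and $v_p(f) = 2\min_j v_p(f_j)$; combined with $v_p(f) \geq v_p(b)$ (from $f = bg$ and $g \in \R[V]$), one deduces $v_p(b_i) \geq v_p(b)/2$ and $v_p(f_j) \geq v_p(b)/2$ for all $i, j$, and hence $v_p(c_{ij}) = v_p(b_i) + v_p(f_j) - v_p(b) \geq 0$. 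The only delicate ingredient is this sum-of-squares formula, which would fail at primes whose residue field contains $\sqrt{-1}$; the real-zeros hypothesis on $b$ is exactly what prevents such bad primes from containing $b$.
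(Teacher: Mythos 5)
Your proof is correct. The paper's own proof is shorter but relies on a black-box citation: it writes $b^2g = bf = \sum_i h_i^2$ (a sum of squares because $b$ and $f$ both are), then invokes \cite{sch:tams} Lemma~0.2 to conclude $b\mid h_i$ for each~$i$, whence $g = \sum_i (h_i/b)^2$. Your argument instead fixes the concrete sum-of-squares representation $bf = \sum_{i,j}(b_if_j)^2$ and shows directly that the candidate quotients $c_{ij} = b_if_j/b$ are regular, by checking $v_p(c_{ij})\ge 0$ at every height-one prime $p$ of the Krull domain $\R[V]$. The only nontrivial case is $p\ni b$, where you correctly identify the residue field of $\R[V]_p$ as $\R(y)$ (this is precisely where the hypothesis that $b$ has only real zeros is used) and then apply the sum-of-squares valuation identity $v(\sum\sigma_k^2)=2\min_k v(\sigma_k)$ over a DVR with formally real residue field. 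In effect you are re-proving the relevant special case of the cited divisibility lemma rather than quoting it, so the underlying mechanism (even vanishing order of sums of squares along real prime divisors) is the same, but your version is self-contained and makes explicit exactly which hypothesis excludes the ``bad'' primes. One minor point worth flagging: the steps $v_p(c_{ij})=v_p(b_i)+v_p(f_j)-v_p(b)$ implicitly assume $b_i,f_j\neq 0$; when some $b_i$ or $f_j$ vanishes, $c_{ij}=0$ and there is nothing to prove, and the degenerate cases $b=0$ or $f=0$ are likewise trivial, so this is harmless but could be mentioned.
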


\begin{proof}
We have $b^2g=bf=\sum_ih_i^2$ with $h_i\in\R[V]$. Since $b$ has only
real zeros, we have $h_i=bg_i$ for suitable $g_i\in\R[V]$, see
\cite{sch:tams} Lemma~0.2, and so $g=\sum_ig_i^2$.
\end{proof}

\begin{lab}
We need a small argument involving divisor class groups. Let $X$ be
an irreducible variety over a field $k$. By $Cl(X)$ we denote the
codimension one Chow group of $X$, i.e.\ the group of Weil divisors
on $X$ modulo rational equivalence. As usual let $\Pic(X)$ be the
Picard group of Cartier divisors on $X$ modulo linear equivalence.
There is a natural map $\Pic(X)\to Cl(X)$ which in general is neither
injective nor surjective. When $X$ is nonsingular (or more generally
locally factorial), the map $\Pic(X)\to Cl(X)$ is a group
isomorphism. See e.g.\ \cite{fu} section~2.1 for these notions and
facts.
\end{lab}

\begin{lab}\label{realnonreal}%
We only need these concepts for nonsingular irreducible varieties $X$
over $k=\R$. Given such $X$ let $\Div(X)$ be the group of Weil
divisors on $X$, i.e.\ the free abelian group on the irreducible
codimension one subvarieties $Y$ (called prime divisors) of~$X$.
A~prime divisor $Y$ is said to be \emph{real} if $Y(\R)$ is Zariski
dense in $Y$, otherwise \emph{nonreal}. Given a Weil divisor
$D=\sum_{i=1}^rY_i$ on $X$ with prime divisors $Y_1,\dots,Y_r$, we
let $D(\R)=\bigcup_{i=1}^rY_i(\R)$.
\end{lab}

\begin{lem}\label{cptdivzero}%
Let $C$ be the plane affine curve $x_1^2+x_2^2=1$ over $\R$. Any Weil
divisor $D\ge0$ on $V=C\times\A^1$ for which $D(\R)$ is compact is
rationally equivalent to zero.
\end{lem}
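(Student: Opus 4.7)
The plan is to compactify $V=C\times\A^1$ to $\ol V=\ol C\times\P^1$, identify $\Pic(V)$ via the localization exact sequence, and then read off the class of $D$ as an intersection number with a horizontal line moved to a large real height.

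First I would set up the geometry. The smooth projective completion $\ol C$ of $C$ is a conic, isomorphic to $\P^1_\R$ since $C(\R)\ne\emptyset$; the complement $\ol C\setminus C$ is a single closed point $\infty_C$ of degree $2$ with residue field $\C$. Hence $\ol V\setminus V=H\cup F_\infty$, where $H=\ol C\times\{\infty\}$ and $F_\infty=\{\infty_C\}\times\P^1$. Taking $F=\{P_0\}\times\P^1$ for any real point $P_0\in\ol C$, one has $\Pic(\ol V)=\Z F\oplus\Z H$ with intersection pairing $F^2=H^2=0$ and $F\cdot H=1$. Since $\deg(\infty_C)=2$, the class $[F_\infty]=2F$, and the localization sequence yields
\[
\Pic(V)=\Pic(\ol V)/\langle[H],\,[F_\infty]\rangle=\Z/2,
\]
with quotient map $aF+bH\mapsto a\bmod 2$.

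Next I would let $\ol D$ be the Zariski closure of $D$ in $\ol V$ and write $[\ol D]=aF+bH$, so that $[D]=a\bmod 2$ and $a=\ol D\cdot H$. To bring the hypothesis into play, I would replace $H$ by a linearly equivalent horizontal line $\ol C\times\{y_0\}$ at a large real height. Since $D(\R)$ is compact, $M:=\sup_{z\in D(\R)}|y(z)|$ is finite; fix a real $y_0$ with $|y_0|>M$ and additionally avoiding the finitely many real values for which $C\times\{y_0\}$ occurs as a component of $D$. Then $\ol D$ and $\ol C\times\{y_0\}$ meet properly, and the resulting intersection $0$-cycle has degree $a$.

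The crux is then the following one-line observation: every closed point $(Q,y_0)$ appearing in this $0$-cycle has residue field $\C$ over $\R$. Indeed, if $Q\in\ol C(\R)=C(\R)$ were real, then $(Q,y_0)\in\ol D\cap V=D$ would give $(Q,y_0)\in D(\R)$, contradicting $|y_0|>M$; so every such $Q$ is a nonreal closed point of $\ol C$, forcing $[\kappa((Q,y_0)):\R]=2$. Each term of the intersection cycle therefore contributes an even integer to the degree, so $a$ is even and $[D]=0$ in $\Pic(V)$. The potentially delicate issue --- whether $\ol D$ has real branches meeting $H$ at real points of $\ol C$, which would otherwise require a semi-algebraic analysis of the curves $\ol{Y_i}$ near the boundary --- is completely sidestepped by moving the fiber from $y=\infty$ to a finite real height beyond $D(\R)$.
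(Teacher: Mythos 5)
Your proof is correct and follows essentially the same strategy as the paper's: both compute the class of $D$ in $Cl(V)\cong\Z/2$ by intersecting with a horizontal fiber $C\times\{y_0\}$ at a real height $y_0$ so large that the intersection, being disjoint from the compact set $D(\R)$, is supported at nonreal closed points and therefore has even degree. The only packaging difference is that the paper cites Fulton's $\A^1$-bundle isomorphism $Cl(C)\isoto Cl(C\times\A^1)$ (with inverse given by intersecting with $C\times\{\xi\}$) directly, whereas you recover $\Pic(V)\cong\Z/2$ from the projective compactification $\ol V\cong\P^1\times\P^1$ via the localization sequence.
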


\begin{proof}
Pullback of divisors via the projection map $C\times\A^1\to C$
induces a group isomorphism $Cl(C)\isoto Cl(C\times\A^1)$, see
\cite{fu} Theorem 3.3.
The inverse map $Cl(C\times\A^1)\to Cl(C)$ is given by intersecting a
divisor on $C\times\A^1$ with the $1$-cycle $C\times\{\xi\}$, for
$\xi\in\A^1(\R)=\R$ an arbitrary point (see \cite{fu} 3.3.1). Hence
the class of the divisor $D$ in the assertion is a $2$-fold in
$Cl(V)$. Since $Cl(V)\cong Cl(C)=\Z/2$, this proves the claim.
\end{proof}

\begin{lab}\label{endproofthm2}%
We give the proof of Theorem~2. Let $f=f(x,y)\in\R[V]$ with $f\ge0$
on $V(R)$. We have to show that $f$ is a sum of squares in $\R[V]$.
Write $f=\sum_{i=0}^da_iy^i$ with $a_i\in\R[C]$ and $a_d\ne0$. By
Lemma \ref{leadcoeff}, $d$ is even and $a_d\ge0$ on $C(\R)$. By Lemma
\ref{factorizeoncurve} we can write $a_d=bc$ with $b,\,c\in\R[C]$,
such that $b$ has only real zeros on~$C$, and such that $b\ge0$ and
$c>0$ on $C(\R)$. Multiplying $f$ with $b^{d-1}$ gives
$$b(x)^{d-1}f(x,y)\>=\>g(x,\,b(x)y),\quad(x,y)\in\C(\R)\times\R,$$
where $g\in \R[V]$ is defined by $g=cy^d+\sum_{i=0}^{d-1}a_ib^{d-1-i}
y^i$.
Clearly, $g\ge0$ on $V(\R)$ as well,
and the leading coefficient $c$ of $g$ is strictly positive on
$C(\R)$. It suffices to prove that $g$ is a sum of squares in
$\R[V]$. Indeed, this implies that $b^{d-1}f$ is a sum of squares in
$\R[V]$,
and by Lemma \ref{standardlem} we conclude that $f$ itself is a sum
of squares in $\R[V]$.

So we can assume that the leading coefficient of $f$ is strictly
positive on $C(\R)$. By Lemma \ref{leadcoeff}(b), the real zero set
$Z(f)\subset V(\R)$ of $f$ is compact. For every real prime divisor
$Y$ on $V$, the vanishing order of $f$ along $Y$ is even. Therefore
we can decompose the Weil divisor $\div(f)$ on $V$ as $\div(f)=2D+E$,
in such a way that every irreducible component of $D$ is real and
every irreducible component of $E$ is nonreal (see
\ref{realnonreal}).

Since $Z(f)$ is compact, it follows that $D(\R)$ is compact as well.
By Lemma \ref{cptdivzero} (and since $\Pic(V)\cong Cl(V)$), this
implies that $D=\div(g)$ for some rational function $g\ne0$ on $V$.
Since $V$ is nonsingular, hence normal, we have $g\in\R[V]$. This
means we have a product decomposition $f=g^2h$ with $g,\,h\in\R[V]$,
and every irreducible component of $\div(h)=E$ is nonreal. Therefore
$h$ has only finitely many zeros in $V(\R)$. By Theorem~1, $h$ is a
sum of squares in $\R[V]$. Hence so is $f$, and Theorem~2 is proved.
\qed
\end{lab}

\begin{rem}
Theorem~2 provides the first example of an affine algebraic surface
$V$ over $\R$ for which $\rm psd=sos$ holds in $\R[V]$, and for which
the ring $B(V)\subset\R[V]$ of bounded polynomials has
$\trdeg\,B(V)\le1$ (c.f.\ Remark \ref{remtrdeg}).
\end{rem}

We may generalize Theorem~2 slightly:

\begin{cor}\label{thm2gen}%
Let $X$ be any nonsingular affine rational curve over $\R$ for which
$X(\R)$ is compact. Then $\rm psd=sos$ holds on the surface
$X\times\A^1$.
\end{cor}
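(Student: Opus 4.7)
The plan is to repeat the argument of \ref{endproofthm2} for Theorem~2 with $X$ in place of $C$. The only curve-specific ingredients in that proof are Lemmas~\ref{factorizeoncurve} and~\ref{cptdivzero}, both of which relied on $\Pic(C)=\Z/2\Z$. First I would note that $X$ must be of the form $\ol X\setminus S$, where $\ol X$ is its smooth projective completion, $S$ is a finite nonempty set of closed points, and (using $X(\R)\ne\varnothing$ and the classification of smooth projective rational curves over $\R$) $\ol X\cong\P^1_\R$. Moreover, each point of $S$ is a nonreal closed point of $\P^1_\R$: removing a real point of $\P^1(\R)=S^1$ would disconnect $X(\R)$ into noncompact arcs. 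Each $P\in S$ therefore has $\deg[P]=2$ in $\Pic(\ol X)=\Z$, and
$$\Pic(X)\;=\;\Pic(\ol X)\bigm/\langle[P]:P\in S\rangle\;=\;\Z/2\Z.$$

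For the analogue of Lemma~\ref{factorizeoncurve}, given a zero $\xi\in X(\R)$ of a nonzero nonnegative $p\in\R[X]$, one picks any $P_0\in S$. Since $\deg(2[\xi])=2=\deg([P_0])$, the divisor $2[\xi]-[P_0]$ on $\ol X$ is principal, so there is $q\in\R(\ol X)^\times$ with $\div_{\ol X}(q)=2[\xi]-[P_0]$. The pole of $q$ lies in $S$, hence $q\in\R[X]$ with $\div_X(q)=2[\xi]$. Its only real zero on $X$ has even order, so $q$ has constant sign on $X(\R)$, and after negating if necessary, $q\ge0$ on $X(\R)$. Induction on the number of real zeros of $p$ then yields the desired factorization, exactly as in Lemma~\ref{factorizeoncurve}.

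For the analogue of Lemma~\ref{cptdivzero}, given effective $D\in\Div(V)$ with $D(\R)$ compact, we must show $[D]=0$ in $\Pic(V)\cong\Pic(X)=\Z/2\Z$. Intersecting with a fiber $X\times\{\xi\}$ identifies $[D]$ with $\sum_i n_i\deg(Y_i\to\A^1)\pmod 2$, the sum running over the prime components $Y_i$ of $D$ which surject onto $\A^1$ (components contained in a single fiber of $V\to\A^1$ contribute trivially). Nonreal points in a fiber occur in conjugate pairs, so
$$\deg(Y_i\to\A^1)\;\equiv\;\bigl|\,Y_i(\R)\cap(X(\R)\times\{\xi\})\,\bigr|\pmod 2$$
for generic real $\xi$. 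Each $Y_i(\R)$, being a closed subset of the compact $D(\R)$, is itself compact, hence a compact $1$-cycle in $V(\R)=X(\R)\times\R\cong S^1\times\R$. By sliding $\xi$ to $+\infty$, where $Y_i(\R)$ has no points, the mod-$2$ intersection of $Y_i(\R)$ with $X(\R)\times\{\xi\}$ is seen to be $0$. Hence each summand is even and $[D]=0$.

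With both analogues in hand, the proof of Theorem~2 from \ref{endproofthm2} runs word for word and shows that $f$ is a sum of squares in $\R[V]$. The main nontrivial point is the topological parity argument in the generalized Lemma~\ref{cptdivzero}; everything else is a direct adaptation, in particular the appeal to Theorem~1 at the end remains unchanged because that theorem was already stated for arbitrary nonsingular $C$ with $C(\R)$ compact.
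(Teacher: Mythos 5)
Your proof is correct, but it takes a genuinely different and substantially longer route than the paper's. The paper observes that any such $X$ with $X(\R)\ne\emptyset$ can be written as $C\setminus Z$, where $C$ is the circle curve and $Z$ is a finite set of nonreal closed points, so that $\R[X]=\R[C]_h$ for some $h\in\R[C]$ vanishing exactly on $Z$ (hence $h>0$ on $C(\R)=X(\R)$). Given $f\in\R[X][y]$ nonnegative on $X(\R)\times\R$, one simply clears denominators: $f=g/h^r$ with $r$ even and $g\in\R[C][y]$, so $g\ge0$ on $C(\R)\times\R$, and Theorem~2 as stated gives $g$ as a sum of squares in $\R[C][y]$, whence $f$ is a sum of squares in $\R[X][y]$. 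Your approach instead reproves the two $C$-specific lemmas (the factorization Lemma~\ref{factorizeoncurve} and the divisor-class Lemma~\ref{cptdivzero}) for the general $X$ and then reruns the whole proof of Theorem~2; this does work, and your verification that $\Pic(X)\cong\Z/2$ and the construction of the tangent-like function $q$ with $\div_X(q)=2[\xi]$ are correct. Two small comments: your parity computation can be made purely algebraic without invoking mod-$2$ intersection theory on the cylinder — since $Y_i\to\A^1$ is finite and $Y_i(\R)$ is compact, the fiber over a large real $\xi$ consists of $\deg(Y_i\to\A^1)$ geometric points, all nonreal and hence in conjugate pairs, so the degree is even; and you would also need to observe that the proof of Lemma~\ref{standardlem} carries over to $X$ (it does, via the same reference). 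The upshot is that while your direct generalization is sound, the localization reduction is far more economical: it derives the corollary from Theorem~2 in three lines without touching its internal machinery.
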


\begin{proof}
For $X(\R)=\emptyset$ the assertion is clear. The only examples of
such $X$ with $X(\R)\ne\emptyset$ are of the form $X=C\setminus Z$
where $Z$ is a finite set of nonreal closed points of $C$ (and $Z$ is
conjugation-invariant, depending on the view point). Choose
$h\in\R[C]$ such that $Z$ is the set of zeros of $h$ in $C$. Then
$\R[X]=\R[C]_h$, the ring of fractions. If $f\in\R[X\times\A^1]=
(\R[C]_h)[y]$ is nonnegative on $X(\R)\times\R$, write
$f=\frac g{h^r}$ with $r\ge0$ even and $g\in\R[C][y]$. Then $g\ge0$
on $C(\R)\times\R$, so $g$ is a sum of squares in $\R[C][y]$ by
Theorem~2. Hence $f$ is a sum of squares in $\R[X\times\A^1]$.
\end{proof}


\end{document}